\theoremstyle{plain}
\newtheorem{theorem}{Theorem}[section]
\newtheorem{corollary}[theorem]{Corollary}
\newtheorem{lemma}[theorem]{Lemma}
\theoremstyle{definition}
\newtheorem{question}[theorem]{Question}
\newtheorem{example}[theorem]{Example}
\begin{document}

\baselineskip 5.7mm

\title{Operator norm and numerical radius analogues of Cohen's inequality\footnote{The paper will appear in Mathematical Inequalities and Applications}} 

\author{Roman Drnov\v sek}


\begin{abstract}
Let $D$ be an invertible multiplication operator on $L^2(X, \mu)$, and let 
$A$ be a bounded operator on $L^2(X, \mu)$. In this note we  prove that 
$\|A\|^2 \le \|D A\| \, \|D^{-1} A\|$, where $\|\cdot\|$ denotes the operator norm. 
If, in addition, the operators $A$ and $D$ are positive, we also have 
$w(A)^2 \le w(D A) \, w(D^{-1} A)$, where $w$ denotes the numerical radius.
\end {abstract}

\maketitle

\noindent
 {\it Key words}:  operator norm, numerical radius,  spectral radius \\
 {\it Math. Subj. Classification (2010)}:  47A30, 47A12,   47A10 \\

\section{Introduction}

Let $A$ be a nonnegative matrix and $D$ a diagonal matrix with positive diagonal entries. 
J. E. Cohen \cite[inequality (3.7)]{Co14a} proved that 
\begin{equation}
r(A)^2 \le r(D A) \, r(D^{-1} A)
\label{Cohen}
\end{equation}
where $r$ denotes the spectral radius. 
In fact, he proved slightly more general inequality \cite[inequality (3.6)]{Co14a}. 
Let $D_1$, $\ldots$, $D_m$ be diagonal matrices with positive diagonal entries such that 
$D_1 \cdots D_m = I$, where $I$ is the identity matrix. Then 
\begin{equation}
 r(A)^m \le r(D_1 A) \, r(D_2 A) \cdots r(D_m A) . 
\label{Cohen_more}
\end{equation}

This inequality is important in the theory of population dynamics 
in Markovian environments; see \cite{Co14b}.  
In this note we consider this inequality with the spectral radius replaced by the operator norm and by the numerical radius. 
In fact, we introduce a more general setting.

Throughout the note, let $\mu$ be a $\sigma$-finite positive measure on a set $X$. 
We consider bounded (linear) operators on the complex Banach space $L^p(X, \mu)$ ($1 \le p \le \infty$). 
The adjoint of an operator $A$ on  $L^p(X, \mu)$ is denoted by $A^*$. 
An operator $A$ on  $L^p(X, \mu)$ is said to be {\it positive} 
if it maps nonnegative functions to nonnegative ones. Given operators $A$ and $B$ on $L^p(X, \mu)$,
we write $A \ge B$ if the operator $A-B$ is positive.
The norm in $L^p(X, \mu)$ and the operator norm are denoted by $\| \cdot \|_p$
and $\| \cdot \|$, respectively. The {\it numerical radius} of an operator $A$ on $L^2(X, \mu)$ is defined by 
$$ w(A) := \sup \{ | \langle A f, f \rangle | : f \in L^2(X, \mu), \| f \|_2 = 1 \} . $$
If, in addition, $A$ is positive, then we have 
$$ w(A) = \sup \{ \langle A f, f \rangle  : f \in L^2(X, \mu) , f \ge 0,  \| f \|_2 = 1 \} . $$
Indeed, this follows from the estimate
$$  | \langle A f, f \rangle | \le \int_X \! |A f| \, |f| \, d\mu \le 
    \langle A |f|, |f| \rangle  $$
that holds for any  $f \in L^2(X, \mu)$. It is well-known \cite{GD97} that 
$$ r(A) \le w(A) \le \|A\| $$
for all bounded operators $A$ on $L^2(X, \mu)$.  
 
We will make use of the following generalized H\"{o}lder's inequality; see e.g. \cite[p. 196, Exercise 31]{Fo99},
or \cite{Wiki} for its proof.

\begin{lemma}
Assume that $r  \in [1,\infty]$ and $p_1$, $\ldots$, $p_m \in [1,\infty]$ satisfy the equality 
$$ \sum_{i=1}^m \frac{1}{p_i} = \frac{1}{r} , $$
where (as usual) we interpret $1/\infty$ as $0$. 
If  $f_i \in L^{p_i}(X, \mu)$ for $i=1, \ldots, m$, then $f_1 \cdots f_m \in L^r(X, \mu)$ and 
$$ \| f_1 \cdots f_m \|_{r} \le \|f_1\|_{p_1} \cdots \|f_m\|_{p_m} \ . $$
\label{Holder}
\end{lemma}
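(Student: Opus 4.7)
The plan is to establish the generalized H\"older inequality by induction on $m$, reducing everything to the classical two-factor H\"older inequality (which I take as a well-known starting point).

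The base case $m = 1$ forces $r = p_1$ and the inequality is in fact equality. The essential ingredient is then the two-factor case: if $p, q, r \in [1, \infty]$ satisfy $1/p + 1/q = 1/r$, then $\|fg\|_r \le \|f\|_p \|g\|_q$. Assuming first that $p, q, r$ are all finite, observe that $1/p \le 1/r$ and $1/q \le 1/r$, so $p, q \ge r$, which means $p/r$ and $q/r$ are indices in $[1,\infty]$ that are conjugate, since $r/p + r/q = 1$. Applying the standard two-factor H\"older inequality to $|f|^r \in L^{p/r}(X,\mu)$ and $|g|^r \in L^{q/r}(X,\mu)$ yields
$$ \int_X |fg|^r \, d\mu \le \Bigl( \int_X |f|^p \, d\mu \Bigr)^{r/p} \Bigl( \int_X |g|^q \, d\mu \Bigr)^{r/q} , $$
and taking $r$-th roots gives the claim. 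The boundary cases in which one of the exponents equals $\infty$ are handled separately by pulling that factor out as a pointwise essential bound.

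For the inductive step, suppose the result holds for $m-1$ factors. Given $p_1, \ldots, p_m$ satisfying the hypothesis, define $s \in [1, \infty]$ by $1/s = 1/r - 1/p_m$; this lies in $[0,1]$ since $0 \le 1/p_m \le 1/r \le 1$, and by construction $\sum_{i=1}^{m-1} 1/p_i = 1/s$. The inductive hypothesis applied to $f_1, \ldots, f_{m-1}$ gives $f_1 \cdots f_{m-1} \in L^s(X, \mu)$ with $\|f_1 \cdots f_{m-1}\|_s \le \prod_{i=1}^{m-1} \|f_i\|_{p_i}$. Combining this with the two-factor case applied to the pair $f_1 \cdots f_{m-1}$ and $f_m$ with exponents $s$, $p_m$, and $r$ gives the full inequality.

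The main (minor) obstacle is the clean handling of the boundary cases in which some $p_i$ equals $1$ or $\infty$, since the power-raising trick requires finite exponents; these cases are settled by using the trivial bound $\|gh\|_r \le \|g\|_\infty \|h\|_r$ to peel off $L^\infty$ factors before running the induction, and by verifying in each step that the auxiliary exponent $s$ remains in $[1, \infty]$.
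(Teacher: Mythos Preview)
Your argument is correct: the induction on $m$, using the classical two-exponent H\"older inequality as the engine (via the substitution $|f|^r \in L^{p/r}$, $|g|^r \in L^{q/r}$ with conjugate exponents $p/r$ and $q/r$), is the standard textbook route, and your bookkeeping for the auxiliary exponent $s$ and the $L^\infty$ boundary cases is sound. The paper itself does not supply a proof of this lemma at all---it simply cites Folland's \emph{Real Analysis} (where this is an exercise) and a Wikipedia entry---so there is no in-paper argument to compare against; your write-up is essentially what one would find in those references.
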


 

\section{Results}

We begin with the operator norm analogue of Cohen's inequality \eqref{Cohen_more}.

\begin{theorem}
\label{main_norm}
Let $D_1$, $\ldots$, $D_m$ be multiplication operators on $L^p(X, \mu)$ ($1 \le p \le \infty$) 
such that $D_1 \cdots D_m = I$.
Let $A$ be a bounded operator $A$ on  $L^p(X, \mu)$. Then
\begin{equation}
\|A\|^m \le \|D_1 A\| \, \|D_2 A\| \cdots \|D_m A\| . 
\label{norm_left}
\end{equation}
If $A$ is the adjoint operator of an operator, then we also have
\begin{equation}
\|A\|^m \le \|A D_1\| \, \|A D_2\| \cdots \|A D_m\| .
\label{norm_right}
\end{equation}

\end{theorem}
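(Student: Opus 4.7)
My plan is to reduce the theorem to the pointwise factorization of the almost-everywhere identity $d_1 \cdots d_m = 1$, where $d_i \in L^\infty(X,\mu)$ is the symbol of $D_i$, and then feed this into the generalized H\"older inequality (Lemma~\ref{Holder}).

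Identifying each $D_i$ with multiplication by $d_i$, the hypothesis $D_1 \cdots D_m = I$ is the pointwise identity $d_1(x) \cdots d_m(x) = 1$ for a.e.\ $x$. Distributing absolute values and $m$-th roots factor by factor yields the key identity
$$ |(Af)(x)| \;=\; \prod_{i=1}^m \bigl| d_i(x)\,(Af)(x) \bigr|^{1/m} \qquad \text{for a.e. } x, $$
valid for every $f \in L^p(X,\mu)$. This is the sole place where the assumption on the product of the $D_i$ enters.

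For $p < \infty$, raise this to the $p$-th power and integrate; Lemma~\ref{Holder} applied with $r=1$ and $p_i = m$ for all $i$ (so that $\sum_i 1/p_i = 1$) to the nonnegative functions $|d_i Af|^{p/m} \in L^m(X,\mu)$ gives
$$ \|Af\|_p^p \;\le\; \prod_{i=1}^m \left( \int_X |d_i\,Af|^p \, d\mu \right)^{1/m} \;=\; \prod_{i=1}^m \|D_i A f\|_p^{p/m}. $$
Bounding $\|D_i A f\|_p \le \|D_i A\|\,\|f\|_p$, taking $p$-th roots, and passing to the supremum over $\|f\|_p = 1$ produces \eqref{norm_left}. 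The endpoint $p = \infty$ is even more direct, since the pointwise factorization immediately yields $\|Af\|_\infty \le \prod_i \|D_i A f\|_\infty^{1/m}$ without any integration.

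For \eqref{norm_right}, write $A = B^*$ for some operator $B$ on the appropriate predual. The adjoints $D_i^*$ are again multiplication operators (by $\overline{d_i}$), and since multiplication operators commute, they still satisfy $D_1^* \cdots D_m^* = I$. Applying \eqref{norm_left} to $B$ with these operators gives $\|B\|^m \le \prod_i \|D_i^* B\|$, and the norm identities $\|B\| = \|A\|$ together with $\|D_i^* B\| = \|(A D_i)^*\| = \|A D_i\|$ translate this bound into \eqref{norm_right}. I do not anticipate a serious obstacle; the only subtle point is noticing that the symmetric factorization forces all H\"older exponents to be $p_i = m$, and that the asymmetry between \eqref{norm_left} and \eqref{norm_right} is caused precisely by the lack of a natural predual for $L^\infty$, which is why the second inequality requires $A$ to be an adjoint.
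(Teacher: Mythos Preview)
Your proof is correct and follows essentially the same strategy as the paper: exploit the pointwise factorization $|Af| = \prod_i |d_i \, Af|^{1/m}$ via the generalized H\"older inequality to obtain \eqref{norm_left}, then pass to (pre)adjoints for \eqref{norm_right}. The only cosmetic difference is that the paper applies Lemma~\ref{Holder} to the functions $d_i g$ with exponents $p_i = p$, $r = p/m$, whereas you first raise to the power $p/m$ and apply the lemma with $p_i = m$, $r = 1$; your parametrization has the minor advantage of staying within the stated hypothesis $r \ge 1$ of Lemma~\ref{Holder} even when $p < m$.
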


\begin{proof}
For each $ i =1, \ldots, m$, let $d_i$ be the function in $L^\infty(X,  \mu)$ such that $D_i f = d_i f$ for 
all $f \in L^p(X, \mu)$. Therefore, $d_1 \cdots d_m = 1$ a.e. on $X$.
There is no loss of generality in assuming that $A  \neq 0$.
Choose an arbitrary number $c \in (0,\|A\|)$. Then there exists a function $f \in L^p(X, \mu)$ such that $\|f\|_p = 1$ 
and the function $g :=A f$ has norm more than $c$. 
Since  $\|D_i A\| \ge \|D_i A f\|_p = \|d_i g\|_p$, we have 
$$ \|D_1 A\| \, \|D_2 A\| \cdots \|D_m A\| \ge \|d_1 g\|_p \, \|d_2 g\|_p \cdots \|d_m g\|_p . $$
Now Lemma \ref{Holder} gives the inequality
$$  \|d_1 g\|_p  \, \|d_2 g\|_p \cdots \|d_m g\|_p \ge 
\| (d_1 g) \cdots (d_m g)\|_{p/m} = \| g^m\|_{p/m} = \|g\|_p^m > c^m \ . $$ 
It follows that 
$$ \|D_1 A\| \, \|D_2 A\| \cdots \|D_m A\| > c^m . $$
Since the number $c \in (0,\|A\|)$ is arbitrary, we obtain the inequality \eqref{norm_left}.

To prove the inequality \eqref{norm_right}, we assume first that $p < \infty$. Then the adjoint operators $A^*$, 
$D_1^*$, $\ldots$, $D_m^*$ are operators on the Banach space $L^q(X, \mu)$, 
where $q \in (1, \infty]$ is the conjugate exponent to $p$. Applying  the inequality \eqref{norm_left} for them, we have
$$ \|A\|^m = \|A^*\|^m \le \|D_1^* A^*\| \cdots \|D_m^* A^*\| = 
\|(A D_1)^*\| \cdots \|(A D_m)^*\| =  \|A D_1\| \cdots \|A D_m\| , $$
proving the inequality \eqref{norm_right} in this case. 

Assume now that $p= \infty$. We are assuming that there exists an operator $B$ on $L^1(X, \mu)$ such that $B^* = A$.
Let $E_i$ ($i=1, \ldots, m$) be the multiplication operator on $L^1(X, \mu)$
with the function $d_i$, so that $E_i^* = D_i$.  Applying  the inequality \eqref{norm_left} for the operators 
$B$, $E_1$, $\ldots$, $E_m$,  we obtain that 
$$ \|A\|^m = \|B\|^m \le \|E_1 B\| \cdots \|E_m B\| =  
\|(E_1 B)^*\| \cdots \|(E_m B)^*\| =  \|A D_1\| \cdots \|A D_m\| . $$
This completes the proof of the theorem.
\end{proof}

\begin{corollary}
Let $D$ be an invertible multiplication operator on $L^p(X, \mu)$ ($1 \le p < \infty$), and let 
$A$ be a bounded operator on  $L^p(X, \mu)$. Then 
\begin{equation}
\|A\|^2 \le \|D A\| \, \|D^{-1} A\| 
\label{norm_left_two}
\end{equation}
and 
$$ \|A\|^2 \le \|A D \| \, \|A D^{-1}\| .  $$
\end{corollary}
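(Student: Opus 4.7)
The plan is to derive both inequalities as the $m=2$ specialization of Theorem \ref{main_norm}, taking $D_1 := D$ and $D_2 := D^{-1}$.

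First I would verify that this choice fits the hypotheses of the theorem. Writing $D$ as multiplication by a function $d \in L^\infty(X,\mu)$, the assumption that $D$ is invertible as a bounded operator on $L^p(X,\mu)$ forces $d$ to be bounded away from zero essentially, so $1/d \in L^\infty(X,\mu)$, and hence $D^{-1}$ is itself a multiplication operator (by $1/d$). Moreover $D_1 D_2 = D D^{-1} = I$, as required.

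With this setup, the first inequality \eqref{norm_left_two} is the direct instance of \eqref{norm_left}. For the second inequality I would invoke \eqref{norm_right}. The one point worth remarking is the theorem's ``$A$ is the adjoint of an operator'' hypothesis: inspection of the proof of Theorem \ref{main_norm} shows that this hypothesis is genuinely used only in the case $p = \infty$, whereas for $p < \infty$ the proof simply passes to $A^*$ on the dual space $L^q(X,\mu)$ and applies the already established \eqref{norm_left} there, yielding $\|A\|^m \le \|A D_1\| \cdots \|A D_m\|$ for \emph{every} bounded operator $A$. Since our corollary restricts to $1 \le p < \infty$, \eqref{norm_right} is available unconditionally, and applying it with $(D_1,D_2)=(D,D^{-1})$ gives $\|A\|^2 \le \|AD\|\,\|AD^{-1}\|$.

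I do not expect a real obstacle here: the corollary is essentially the symmetric two-factor case of Theorem \ref{main_norm}. The only mild subtlety is recognizing (i) that $D^{-1}$ is again a multiplication operator once $D$ is assumed boundedly invertible, and (ii) that the adjoint hypothesis attached to \eqref{norm_right} is vacuous in the whole range $1 \le p < \infty$, which is the range covered by the corollary.
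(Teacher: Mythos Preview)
Your proposal is correct and matches the paper's intent: the corollary is stated without proof precisely because it is the $m=2$ instance of Theorem~\ref{main_norm} with $D_1=D$, $D_2=D^{-1}$. Your observations that $D^{-1}$ is again a multiplication operator and that the adjoint hypothesis in \eqref{norm_right} is only invoked for $p=\infty$ (hence is vacuous on the range $1\le p<\infty$ of the corollary) are exactly the points that justify the immediate deduction.
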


We now turn to the numerical radius analogue of Cohen's inequality.

\begin{theorem}
\label{main_numerical}
Let $D_1$, $\ldots$, $D_m$ be positive multiplication operators on $L^2(X, \mu)$ such that $D_1 \cdots D_m \ge I$.
Then, for any positive operator $A$ on  $L^2(X, \mu)$,
\begin{equation}
w(A)^m \le w(D_1 A) \, w(D_2 A) \cdots w(D_m A) 
\label{numerical_left}
\end{equation}
and 
\begin{equation}
w(A)^m \le w(A D_1) \, w(A D_2) \cdots w(A D_m) 
\label{numerical_right}
\end{equation}

\end{theorem}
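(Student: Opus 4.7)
The plan is to mimic the proof of Theorem \ref{main_norm}, working at the level of inner products rather than $L^p$ norms, and using the formula $w(A) = \sup\{\langle Af, f\rangle : f \ge 0, \, \|f\|_2 = 1\}$ for positive operators that was recalled in the introduction.

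Let $d_i \in L^\infty(X,\mu)$ be the symbol of $D_i$, so $d_i \ge 0$ a.e., and the hypothesis $D_1 \cdots D_m \ge I$ becomes $d_1 \cdots d_m \ge 1$ a.e. on $X$. Fix a nonnegative $f \in L^2(X,\mu)$ with $\|f\|_2 = 1$. Since $A$ is positive, $Af \ge 0$, and because $(d_1 \cdots d_m)^{1/m} \ge 1$ I obtain
$$ \langle Af, f\rangle = \int_X (Af)\, f \, d\mu \; \le \; \int_X (d_1 \cdots d_m)^{1/m}\, (Af)\, f \, d\mu . $$
I would then regroup the last integrand as the pointwise product of the $m$ nonnegative functions $(d_i (Af) f)^{1/m}$ and apply Lemma \ref{Holder} with $r = 1$ and $p_1 = \cdots = p_m = m$, which gives
$$ \int_X (d_1 \cdots d_m)^{1/m} (Af) f \, d\mu \; \le \; \prod_{i=1}^m \Bigl( \int_X d_i (Af) f \, d\mu \Bigr)^{1/m} = \prod_{i=1}^m \langle D_i A f, f\rangle^{1/m} . $$

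Each $D_i A$ is positive as a composition of positive operators, so $\langle D_i A f, f\rangle \le w(D_i A)$ by the formula for $w$ on positive operators. Raising the combined inequality $\langle Af, f\rangle \le \prod_i \langle D_i A f, f\rangle^{1/m}$ to the $m$-th power and taking the supremum over nonnegative unit $f$ yields \eqref{numerical_left}. For \eqref{numerical_right} I would apply \eqref{numerical_left} to the Hilbert-space adjoint $A^*$, which is itself a positive operator (the inequality $\langle Af, g\rangle \ge 0$ for $f, g \ge 0$ transfers to $A^*$); each $D_i$ is self-adjoint since its symbol is real, and the standard identity $w(T^*) = w(T)$ then produces $w(A^*) = w(A)$ and $w(D_i A^*) = w((D_i A^*)^*) = w(A D_i)$, so \eqref{numerical_right} is immediate.

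The only genuinely nontrivial step is the H\"older splitting with equal exponents $m$, and I expect no obstacle there: the hypothesis $d_1 \cdots d_m \ge 1$ is used in precisely the direction needed, enlarging an intermediate integral rather than shrinking it, which is exactly what the argument requires.
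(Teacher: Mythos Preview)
Your argument is correct and follows essentially the same approach as the paper: both proofs fix a nonnegative unit vector $f$, use the pointwise bound $(d_1\cdots d_m)^{1/m}\ge 1$ together with a generalized H\"older inequality to dominate $\langle Af,f\rangle$ by $\prod_i \langle D_iAf,f\rangle^{1/m}$, and then pass to adjoints for \eqref{numerical_right}. The only cosmetic differences are that the paper phrases the supremum step via an auxiliary constant $c<w(A)$ and applies Lemma~\ref{Holder} with exponents $p_i=2$, $r=2/m$ to the functions $\sqrt{d_i(Af)f}$, whereas you use $p_i=m$, $r=1$ on $(d_i(Af)f)^{1/m}$, which is the same inequality after squaring.
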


\begin{proof}
For each $ i =1, \ldots, m$, let $d_i$ be the function in $L^\infty(X,  \mu)$ such that $D_i f = d_i f$ for all 
$f \in L^2(X, \mu)$.
By the assumption, we have $d_1 \cdots d_m \ge 1$ a.e. on $X$.
There is no loss of generality in assuming that $A \neq 0$.
Choose an arbitrary number $c \in (0,w(A))$. 
Then there exists a nonnegative function $f \in L^2(X, \mu)$ such that $\|f\|_2 = 1$ 
and the nonnegative function $g :=A f$ satifies the inequality $ \langle g, f \rangle > c$. 
Since  $w(D_i A) \ge \langle D_i A f, f \rangle  = \langle d_i g, f \rangle = \| \sqrt{d_i g f} \|_2^2$,  we have 
$$ w(D_1 A) \, w(D_2 A) \cdots w(D_m A)  \ge 
\left( \| \sqrt{d_1 g f} \|_2 \, \| \sqrt{d_2 g f} \|_2 \cdots \| \sqrt{d_m g f} \|_2 \right)^2. $$
Using Lemma \ref{Holder} we obtain the inequality
$$   \| \sqrt{d_1 g f} \|_2 \, \| \sqrt{d_2 g f} \|_2 \cdots \| \sqrt{d_m g f} \|_2 
\ge \left( \int_X (d_1 \cdots d_m)^{1/m} g f \, d\mu \right)^{m/2} \ge $$
$$ \ge \left( \int_X g f \, d\mu \right)^{m/2} > c^{m/2}  .  $$ 
It follows that 
$$ w(D_1 A) \, w(D_2 A) \cdots w(D_m A)  > c^m . $$
Since the number $c \in (0,w(A))$ is arbitrary, we get the inequality \eqref{numerical_left}.

To prove \eqref{numerical_right}, we apply \eqref{numerical_left} for the adjoint operator $A^*$:
$$ w(A)^m = w(A^*)^m \le w(D_1 A^*) \cdots w(D_m A^*) =  w(A D_1) \cdots w(A D_m) .  $$
\end{proof}

\begin{corollary}
\label{numerical_two}
Let $D$ be an invertible positive multiplication operator on $L^2(X, \mu)$, and let 
$A$ be a positive operator on  $L^2(X, \mu)$. Then 
\begin{equation}
w(A)^2 \le w(D A) \, w(D^{-1} A) 
\label{numerical_left_two}
\end{equation}
and 
$$ w(A)^2 \le w(A D) \, w(A D^{-1}) .  $$
\end{corollary}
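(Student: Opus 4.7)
The plan is to deduce this corollary directly from Theorem \ref{main_numerical} by specializing to $m=2$ with $D_1 := D$ and $D_2 := D^{-1}$. The only real content lies in verifying that the pair $(D, D^{-1})$ satisfies the hypotheses of the theorem; once that is in place, the two inequalities drop out immediately from \eqref{numerical_left} and \eqref{numerical_right}.

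First I would set up the multiplication structure. Writing $D$ as multiplication by a function $d \in L^\infty(X,\mu)$, positivity of $D$ forces $d \ge 0$ a.e., and invertibility of $D$ as a bounded operator on $L^2(X,\mu)$ forces $d$ to be bounded away from zero a.e.\ with $1/d \in L^\infty(X,\mu)$. Thus $D^{-1}$ is itself a multiplication operator, by the function $1/d$, and since $1/d \ge 0$ a.e.\ it is a positive operator as well. Moreover $D \cdot D^{-1} = I$, so in particular $D_1 D_2 \ge I$, which is exactly what Theorem \ref{main_numerical} requires.

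With the hypotheses verified, I would then invoke \eqref{numerical_left} with $m=2$ to obtain
\[
 w(A)^2 \le w(D_1 A)\, w(D_2 A) = w(D A)\, w(D^{-1} A),
\]
and likewise \eqref{numerical_right} with $m=2$ to obtain
\[
 w(A)^2 \le w(A D_1)\, w(A D_2) = w(A D)\, w(A D^{-1}).
\]

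There is no substantive obstacle here: the only point where care is needed is confirming that $D^{-1}$ qualifies as a positive multiplication operator, and this is a routine consequence of the invertibility of $D$ on $L^2(X,\mu)$ combined with the fact that positive multiplication operators correspond precisely to nonnegative $L^\infty$-functions.
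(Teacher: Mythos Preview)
Your proposal is correct and is precisely the intended derivation: the corollary is obtained by specializing Theorem \ref{main_numerical} to $m=2$ with $D_1=D$ and $D_2=D^{-1}$, after noting that $D^{-1}$ is again a positive multiplication operator and $D D^{-1}=I\ge I$. The paper treats this as immediate and gives no separate proof.
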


The following example shows that in Theorem \ref{main_numerical} and Corollary \ref{numerical_two}
we cannot omit the assumption that multiplication operators are positive. The same example also shows that 
Cohen's inequality does not hold without the positivity assumption.

\begin{example}
Define the matrices $A$ and $D$ by
$$ A = \left( \begin{matrix}
1 & 1 \cr
1 & 1 
\end{matrix} \right)  
\ \ \ \textrm{and} \ \ \
 D = \left( \begin{matrix}
1 & 0 \cr
0 & -1 
\end{matrix} \right)  .
$$
One can verify that  $r(A) = \|A\| = 2$. 
Since $r(A) \le w(A) \le \|A\|$, we conclude that $w(A) = 2$ as well.
Since 
$$ D A = D^{-1} A = 
\left( \begin{matrix}
1 & 1 \cr
-1 & -1 
\end{matrix} \right)   $$
is unitarily equivalent to a multiple of a Jordan nilpotent $J$ and $w(J) = 1/2$, we have 
$w(D^{-1} A) = w(D A) = \| D A \| /2 = 1$,
and so the inequality \eqref{numerical_left_two} does not hold.
Since $r(D^{-1} A) = r(D A) = 0$, the inequality \eqref{Cohen} is not true either.
\end{example}

One may ask whether the inequality 
$$ w(A)^2 \le w(D A) \, w(A D^{-1}) $$
holds for an invertible positive multiplication operator $D$ on $L^2(X, \mu)$ and for 
a positive operator $A$ on  $L^2(X, \mu)$. The following example show that this is not the case.

\begin{example}
Define the matrices $A$ and $D$ by
$$ A = \left( \begin{matrix}
0 & 1 \cr
0 & 0 
\end{matrix} \right)  
\ \ \ \textrm{and} \ \ \
 D = \left( \begin{matrix}
d & 0 \cr
0 & 1 
\end{matrix} \right) ,
$$
where $d \in (0,1)$. Then $w(A) = w(A D^{-1}) = 1/2$ and 
$w(D A)= d/2$, so that $ w(A)^2 > w(D A) \, w(A D^{-1}) $.
\end{example}

We conclude this note by posing an open question.

\begin{question}
Is Cohen's inequality \eqref{Cohen} true for operators on the space $L^2(X, \mu)$? That is, does it the inequality 
\begin{equation}
r(A)^2 \le r(D A) \, r(D^{-1} A)
\label{Cohen2}
\end{equation}
hold for an invertible positive multiplication operator $D$ on $L^2(X, \mu)$ and for 
a positive operator $A$ on  $L^2(X, \mu)$?
\end{question}

If the operator $A$ has rank one, the answer is affirmative. Namely, if $A = u \otimes v$ for some nonnegative functions 
$u$ and $v$ in $L^2(X, \mu)$ and if the positive function $\varphi \in L^\infty(X, \mu)$ corresponds 
to the multiplication operator $D$,  then we have 
$$ r(A) =  \int_X u v \, d\mu = 
\int_X \sqrt{\varphi u v} \sqrt{u v / \varphi}  \, d\mu , $$
$$ r(D A) =  \int_X \varphi u v \, d\mu \ , \ \ 
r(D^{-1} A) =  \int_X u v / \varphi \, d\mu $$
so that \eqref{Cohen2} holds by the Cauchy-Schwarz inequality.

\vspace{3mm}
{\it Acknowledgment.} The author acknowledges the financial support from the Slovenian Research Agency  (research core funding No. P1-0222).

\vspace{2mm}

\baselineskip 6mm
\noindent
Roman Drnov\v sek \\
Department of Mathematics \\
Faculty of Mathematics and Physics \\
University of Ljubljana \\
Jadranska 19 \\
SI-1000 Ljubljana, Slovenia \\
e-mail : roman.drnovsek@fmf.uni-lj.si 

\end{document}